 \newlength\tindent
\newtheorem{theorem}{Theorem}
\newtheorem{lemma}{Lemma}
\newcommand{\mlegendre}[2]{\left(\frac{#1}{#2}\right)}
\begin{document}
\title{Large gaps between sums of two squareful numbers}
\author{Alexander Kalmynin \footnote{National Research University Higher School of Economics, Moscow, Russia} \footnote{Steklov Mathematical Institute of Russian Academy of Sciences, Moscow, Russia} \\ email: \href{mailto:alkalb1995cd@mail.ru}{alkalb1995cd@mail.ru} 
\and Sergei Konyagin \footnotemark[\value{footnote}] \\ email: \href{mailto:konyagin23@gmail.com}{konyagin23@gmail.com}}
\date{}
\maketitle
\begin{abstract}
Let $M(x)$ be the length of the largest subinterval of $[1,x]$ which does not contain any sums of two squareful numbers. We prove a lower bound
\[
M(x)\gg \frac{\ln x}{(\ln\ln x)^2}
\]
for all $x\geq 3$. The proof relies on properties of random subsets of the prime numbers
\end{abstract}

\section{Introduction}

A non-negative integer $n$ is called squareful (or powerful) if either $n=0$ or in its factorization $n=p_1^{\alpha_1}\ldots p_s^{\alpha_s}$ we have $\alpha_i\geq 2$ for all $i$. Equivalently, $n$ is squareful if and only if it can be represented as $n=a^2b^3$ for some $a,b \in \mathbb Z_{\geq 0}$. One can show (see \cite{Gol}) that the number $k(x)$ of squareful numbers $n$ below $x$ satisfies
\[
k(x)=\frac{\zeta(3/2)}{\zeta(3)}\sqrt{x}+O(x^{1/3}),
\]
which implies that squares form a subset of squareful numbers of positive relative density. However, it turns out that sumsets of these two sets are very different. Namely, if $\mathcal S$ is the set of all sums of two squares, then by theorem of Landau \cite{Lan} we have
\[
|\mathcal S\cap [1,x]|\sim \frac{Kx}{\sqrt{\ln x}},
\]
where $K\approx 0.764$ is the Landau-Ramanujan constant. Now, let us denote by $\mathcal V$ the set of all numbers which are sums of two squareful numbers. V. Blomer \cite{Blo} proved the following estimate
\begin{theorem}
For $x\to +\infty$ we have
\[
V(x):=|\mathcal V\cap [1,x]|=\frac{x}{(\ln x)^{\alpha+o(1)}},
\]
where $\alpha=1-\frac{1}{\sqrt[3]{2}}\approx 0.206299$.
\end{theorem}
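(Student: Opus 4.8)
The plan is to translate the representation problem into one about values of a family of binary quadratic forms and then to count, with sufficient uniformity, the integers they represent. Writing each squareful number in the canonical shape $a^2 b^3$ with $b$ squarefree, a number $n$ is a sum of two squareful numbers exactly when $n = b_1^3 a_1^2 + b_2^3 a_2^2$ for some squarefree $b_1, b_2$ and some $a_1, a_2 \ge 1$; equivalently $n$ is represented by one of the positive definite forms $f_{b_1,b_2}(X,Y) = b_1^3 X^2 + b_2^3 Y^2$ of discriminant $-4 b_1^3 b_2^3$. Since $n \le x$ forces $b_i^3 \le x$, the relevant forms have $b_i \le x^{1/3}$, and their discriminants may be as large as a fixed power of $x$. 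Thus $V(x)$ is the cardinality of the union, over this family, of the sets $E_{b_1,b_2}(x)$ of integers up to $x$ represented by $f_{b_1,b_2}$. I would first isolate the dominant subfamily, where one summand is a perfect square, i.e. $b_2 = 1$ and $f = b_1^3 X^2 + Y^2$, and treat the general case by the same machinery, since allowing both cube-parts nontrivial only enlarges the union and should not change the exponent.

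For the upper bound I would bound $V(x)$ by $\sum_{b_1, b_2} |E_{b_1,b_2}(x)|$. For a single form of discriminant $-D$ the number of represented integers up to $x$ is governed, via genus theory, by a splitting condition: a prime $p \nmid 2D$ with $\left(\frac{-b_1 b_2}{p}\right) = -1$ must divide $n$ to an even power. By a Landau--Selberg--Delange analysis this count is of order $x (\log x)^{-1/2}$ for a fixed form, but the constant and the secondary behaviour depend on $D$, and what is needed here is an estimate that is uniform as $D$ ranges up to a power of $x$. Summing such uniform bounds over all $b_1, b_2 \le x^{1/3}$, and optimizing the resulting expression over the sizes of $b_1$ and $b_2$, is where the exponent is produced: the balance between the abundance of admissible cube-parts (a gain that behaves like a power of $x^{1/3}$ in the relevant ranges) and the thinning of each represented set yields precisely $\alpha = 1 - 2^{-1/3}$, the $1/3$ reflecting the constraint $b_i \le x^{1/3}$ and the $2$ the quadratic (square-residue) nature of the local conditions.

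For the lower bound I would reverse the inequality, which is more delicate because the union could in principle be much smaller than the sum of the sizes. The natural route is a second-moment (variance) argument for the representation function $r(n) = \#\{(a_1,b_1,a_2,b_2): n = b_1^3 a_1^2 + b_2^3 a_2^2\}$: the first moment $\sum_{n \le x} r(n)$ is of order $x$, so by Cauchy--Schwarz $V(x) \ge (\sum_n r(n))^2 / \sum_n r(n)^2$, and it suffices to show that $r(n)$ is not too concentrated, i.e. that the count of quadruples with $b_1^3 a_1^2 + b_2^3 a_2^2 = b_3^3 a_3^2 + b_4^3 a_4^2$ is $O(x (\log x)^{\alpha + o(1)})$. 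Establishing such a bound amounts to controlling correlations between the forms $f_{b_1,b_2}$ for different cube-parts, again an estimate about binary quadratic forms of large discriminant, and to ensuring that distinct forms contribute genuinely new integers rather than overlapping heavily.

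The main obstacle, in both directions, is the uniform treatment of binary quadratic forms whose discriminant is as large as a fixed power of $x$: the classical asymptotics for the number of integers represented by a form hold for discriminant small relative to $x$, whereas here the discriminant and $x$ are comparable, so the represented set is thin and its size is sensitive to the arithmetic of the form. I expect the heart of the proof to be precisely these uniform represented-integer estimates, proved via character sums, the large sieve, or mean-value theorems averaged over the cube-parts, together with the optimization over the size of the cube-parts that converts them into the clean exponent $1 - 2^{-1/3}$.
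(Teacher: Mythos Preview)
The paper does not prove this theorem. Theorem~1 is quoted as a result of V.~Blomer, with the sharpening $V(x)=x(\ln\ln x)^{O(1)}/(\ln x)^{\alpha}$ attributed to Blomer--Granville; it appears only as background motivation for the paper's own contribution, Theorem~2 on gaps. There is therefore no proof in the paper against which to compare your proposal.

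For what it is worth, your outline is consistent with the approach signalled by the very title of Blomer's cited work (``Binary quadratic forms with large discriminants and sums of two squareful numbers''): the reduction to the diagonal forms $b_1^3X^2+b_2^3Y^2$ with squarefree $b_1,b_2\le x^{1/3}$, the need for represented-integer estimates uniform in discriminants as large as a power of $x$, and a second-moment device for the lower bound are indeed the structural ingredients. But what you have written is a plan, not a proof. The two items you yourself flag as ``the main obstacle'' --- a bound on $|E_{b_1,b_2}(x)|$ uniform for $b_1b_2$ up to a power of $x$, and the estimate $\sum_{n\le x}r(n)^2\ll x(\ln x)^{\alpha+o(1)}$ --- are essentially the entire content of the theorem, and neither is carried out. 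In particular, your verbal account of the exponent (``the $1/3$ reflecting the constraint $b_i\le x^{1/3}$ and the $2$ the quadratic nature of the local conditions'') is not sufficient to pin down $1-2^{-1/3}$; that number emerges from an actual optimization which the present paper does not reproduce, so you would need to consult Blomer's and Blomer--Granville's papers directly.
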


Theorem 1 was further improved by V. Blomer and A. Granville \cite{BlGr}, they proved that $V(x)=\frac{x(\ln\ln x)^{O(1)}}{(\ln x)^{\alpha}}$.

In this work, we study the distribution of gaps between elements of $\mathcal V$. Namely, let $x$ be a large positive number, then we define $M(x)$ as the length of the largest subinterval of $[1,x]$ without elements of $\mathcal V$. Theorem 1 implies that $M(x)\gg (\ln x)^{\alpha+o(1)}$, since the largest gap is at least as large as the average gap. Here we prove the following result
\begin{theorem}
For $x\geq 3$ we have
\[
M(x)\gg \frac{\ln x}{(\ln\ln x)^2}.
\]
\end{theorem}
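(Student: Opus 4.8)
The plan is to produce the gap by a probabilistic sieve, choosing a random family of primes to divide the consecutive integers of a window and certifying non-representability through a simple congruence obstruction. First I would reformulate the problem. Writing every squareful number uniquely as $b^3a^2$ with $b$ squarefree, an integer $v$ is a sum of two squareful numbers exactly when $v=b_1^3a_1^2+b_2^3a_2^2$ for some squarefree $b_1,b_2$ and some $a_1,a_2\ge 1$. Thus the interval $(N,N+L]$ avoids $\mathcal V$ precisely when none of the values $v=N+j$, $1\le j\le L$, is represented by any of the binary forms $b_1^3X^2+b_2^3Y^2$; the task is to block every such form for every $v$ in the window simultaneously.

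The engine is the following elementary obstruction, which I would isolate as a lemma: if $p\equiv 3\pmod 4$ and $p\|v$, then in any representation $v=b_1^3a_1^2+b_2^3a_2^2$ one necessarily has $p\nmid b_1b_2$ and $\mlegendre{b_1b_2}{p}=-1$. This follows by $p$-adic valuations (if $p\mid b_i$ then one summand has $v_p\ge 3$ while $v_p(v)=1$ forces an odd $p$-valuation on a square, impossible) together with the quadratic residue count mod $p$. Consequently, forcing $p\|v$ with $\mlegendre{b_1b_2}{p}=+1$ kills the pair $(b_1,b_2)$ outright; moreover the same prime handles large cube parts, since $p\|v$ and $p^2\mid\ell$ force $p\|(v-\ell)$, so $v-\ell$ is non-squareful. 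Since for a fixed non-square $d=b_1b_2$ the symbol $\mlegendre{d}{p}$ equidistributes in $\{\pm1\}$ as $p$ runs over primes $\equiv 3\pmod 4$, a single random such prime kills a given pair with probability about $1/2$, and the rare representations with a large cube part are controlled by the sparsity bound $\ll\sqrt{v}/\sqrt{B}$ for squareful numbers whose cube part exceeds $B$.

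For the construction I would, for each $j$, pick a random subset $S_j$ of primes $\equiv3\pmod4$ of size $k\asymp\log\log x$ from a dyadic range near $P_0\asymp\log x\,\log\log x$, impose $N\equiv-j\pmod p$ for $p\in S_j$ (disjoint across $j$, and arranged so that $p\|N+j$ rather than $p^2\mid N+j$) by the Chinese Remainder Theorem. A representation $v=N+j=b_1^3a_1^2+b_2^3a_2^2$ then survives only if \emph{every} $p\in S_j$ not dividing $d=b_1b_2$ gives $\mlegendre{d}{p}=-1$, an event of probability $\ll 2^{-k}$ over the random choice of $S_j$. Hence the expected number of surviving representations across the window is $\ll 2^{-k}\sum_{j\le L}\mathbb E_N\,r(N+j)$, where $r(v)$ counts representations of $v$ as a sum of two squareful numbers. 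Taking $k$ a large multiple of $\log\log x$ makes $2^{-k}$ a large negative power of $\log x$, so a good $N\le x$ exists as soon as this expectation is $<1$. The size budget $\prod_{j,i}p_{j,i}\le x$ forces $L\,k\,\log P_0\lesssim\log x$, i.e. $L\asymp\log x/(\log\log x)^2$, which is exactly the claimed bound (one factor $\log\log x$ from the number of primes per index, one from their size).

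The hard part will be bounding $\sum_{j\le L}\mathbb E_N\,r(N+j)$ by $L(\log x)^{O(1)}$: a usable estimate must control both the average number of representations and the inflation caused by conditioning on the divisibilities $p\mid N+j$, so this step needs an upper-bound sieve estimate for $\mathcal V$ in arithmetic progressions, in the spirit of the counting of Blomer and Granville. A secondary obstacle is to make the equidistribution of $\mlegendre{d}{p}$ over the random primes sufficiently uniform and quasi-independent across the $k$ choices — handling squarefull $d$, small $d$, and the cases $p\mid d$, and in particular ensuring that large values of $d$ (large cube parts, where reciprocity forces the modulus $4d$ past $P_0$) contribute negligibly via their sparsity rather than via equidistribution. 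The remaining work is the Chinese Remainder bookkeeping guaranteeing $p\|N+j$ exactly and $N\le x$, which is routine.
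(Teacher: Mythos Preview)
Your high-level plan --- random $K$-tuples of primes per slot, a congruence obstruction via the Legendre symbol of the product of cube parts, then CRT --- is exactly the skeleton of the paper's proof. The execution, however, diverges from the paper at two structural points, and as written your version has gaps at both.

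First, the paper \emph{derandomizes before it counts}. Using a Linnik-type input (its Lemma~2, which needs the primes to lie in $[M^D,2M^D]$ for a large absolute $D$, not merely around $\log x\,\log\log x$), it shows that for every squarefree $c\le M^4$ a fixed positive proportion of primes $p$ in the range satisfy $\bigl(\tfrac{-c}{p}\bigr)=-1$; hence with high probability every random $K$-set $P_j$ meets every such set. It then \emph{fixes} one successful family $\{P_j^*\}$ and lets $v$ run over the whole residue class mod $P=\prod p$ inside a long segment $[0,P^{10}]$. All counting is thereafter deterministic. In your scheme the integer $N$ is a function of the random $\{S_j\}$, so the two indicator events ``$(b_1,b_2,a_1,a_2)$ represents $N+j$'' and ``every $p\in S_j$ has $\bigl(\tfrac{b_1b_2}{p}\bigr)=-1$'' are correlated through $N$, and the factorization
\[
\mathbb E[\#\text{surviving}] \;\ll\; 2^{-k}\sum_{j\le L}\mathbb E_N\,r(N+j)
\]
does not follow as written. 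The paper's two-step decoupling is precisely what makes this clean.

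Second, for representations with large cube product the paper does not use sparsity of squareful numbers, nor any sieve upper bound for $\mathcal V$ in progressions. Instead it uses an elementary lemma: for squarefree $P$ with $(AB,P)=1$, the congruence $AX^2+BY^2\equiv u\pmod P$ has at most $\sigma_1(P)\ll P\log\log P$ solutions. Working in $[0,P^{10}]$ gives enough room that, after summing over all forms $a^3X^2+b^3Y^2$ with $ab>M^4$ (and handling separately the cases $\max(a,b)>P^{8/3}$ and $(ab,P)>1$), one saves a power of $M$. Your ``sparsity'' proposal bounds the count of such representations over \emph{all} integers $\le x$, but not over a residue class of density $1/P$; and invoking Blomer--Granville-type estimates in progressions is both heavier than needed and would require a level of uniformity in the modulus that is not available. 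A minor related point: the paper never enforces $p\parallel v+j$ via CRT; it simply imposes $p\mid v+j$ and shows that the exceptional event $p^2\mid v+j$ is rare across the residue class, which is simpler than your ``arrange $p\parallel N+j$'' step.
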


The proof of Theorem 2 is in some ways similar to proofs of lower bounds for gaps between sums of two squares (see, for example, \cite{Erd,Ric,DEKKM}), but also different in several crucial aspects. For example, unlike $\mathcal S$, the set $\mathcal V$ does not avoid any residue classes, so we need to count numbers of solutions of certain congruences throughout the proof.

\section{Proof of Theorem 2}

To prove our result, we will use two auxiliary statements. Let us start with the more 
 elementary one.
\begin{lemma}
Let $P$ be a squarefree number, $A,B$ --- integers with $(AB,P)=1$. Then for any $u$ the number of solutions of the congruence
\[
Ax^2+By^2\equiv u \pmod P
\]
does not exceed $2^L\sigma_1(P)$, where $\sigma_1(P)=\sum\limits_{d\mid P}d\ll P\ln\ln P$ and $L$ is the number of prime factors of $(u,P)$.
\end{lemma}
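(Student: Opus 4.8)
The plan is to reduce to a single prime modulus and count the solutions there with quadratic character sums. Since $P$ is squarefree, the Chinese Remainder Theorem factors the congruence completely: writing $N_p(u)$ for the number of solutions of $Ax^2+By^2\equiv u\pmod p$, the number of solutions modulo $P$ equals $\prod_{p\mid P}N_p(u)$. Because $\sigma_1$ is multiplicative with $\sigma_1(p)=p+1$ and $P$ is squarefree, so that $\sigma_1(P)=\prod_{p\mid P}(p+1)$, it is enough to bound each factor by $N_p(u)\le p+1$.

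For an odd prime $p$ (recall $(AB,P)=1$) I would slice on the value $v$ taken by $Ax^2$. Since the number of $x\bmod p$ with $Ax^2\equiv v$ is $1+\mlegendre{Av}{p}$, with the convention $\mlegendre{0}{p}=0$, one gets
\[
N_p(u)=\sum_{v\bmod p}\left(1+\mlegendre{Av}{p}\right)\left(1+\mlegendre{B(u-v)}{p}\right).
\]
Expanding, the constant term yields $p$, the two single-character sums $\sum_v\mlegendre{Av}{p}$ and $\sum_v\mlegendre{B(u-v)}{p}$ vanish, and the cross term equals $\mlegendre{AB}{p}\sum_v\mlegendre{v(u-v)}{p}$. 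The last sum is a Legendre sum of a quadratic in $v$ whose discriminant is $u^2$; by the standard evaluation it equals $-\mlegendre{-1}{p}$ as soon as $p\nmid u$, so that $N_p(u)=p-\mlegendre{-AB}{p}\le p+1$. The prime $p=2$ is handled directly: since $A,B$ are odd the congruence collapses to $x+y\equiv u\pmod 2$, giving $N_2(u)=2\le 3$.

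The step I expect to be the main obstacle is the degenerate primes $p\mid u$. For such $p$ the discriminant $u^2$ vanishes modulo $p$, the sum becomes $\sum_v\mlegendre{-v^2}{p}=(p-1)\mlegendre{-1}{p}$, and hence $N_p(u)=p+(p-1)\mlegendre{-AB}{p}$. This still obeys $N_p(u)\le p+1$ when $-AB$ is a quadratic nonresidue modulo $p$, but if $-AB$ is a residue the count rises to $2p-1$; so the generic primes $p\nmid u$ are harmless, and the care must go into the primes dividing $\gcd(u,P)$ (in particular everything is transparent when $(u,P)=1$, since then no such primes arise). Once the per-prime bound $N_p(u)\le p+1$ is secured for all $p\mid P$, multiplying gives $\prod_{p\mid P}N_p(u)\le\sigma_1(P)$, and the remaining estimate $\sigma_1(P)\ll P\ln\ln P$ is the classical maximal order of the sum-of-divisors function, which follows from $\sigma_1(P)/P=\prod_{p\mid P}(1+1/p)$ and Mertens' theorem.
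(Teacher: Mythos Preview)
Your approach coincides with the paper's: reduce by CRT to a single prime, write $N_p(u)=\sum_v\bigl(1+\mlegendre{Av}{p}\bigr)\bigl(1+\mlegendre{B(u-v)}{p}\bigr)$, and evaluate the cross term as a quadratic Legendre sum. The paper carries this out via the substitution $a\mapsto ua^{-1}$ on the nonzero residues, turning $\mlegendre{a(u-a)}{p}$ into $\mlegendre{ua^{-1}-1}{p}$; since the values $ua^{-1}-1$ then sweep out every residue except $-1$, the cross sum equals $-\mlegendre{-1}{p}$, whence $N_p(u)=p\pm1$, and one multiplies over $p\mid P$.

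The obstacle you flag is genuine, and the paper's own argument simply glosses over it: the substitution above is a bijection only when $p\nmid u$. When $p\mid u$ one gets, exactly as you computed, $N_p(u)=p+(p-1)\mlegendre{-AB}{p}$, which is $2p-1$ whenever $-AB$ is a residue. In fact the lemma is \emph{false} as stated: for $P=5$, $A=B=1$, $u=0$ the congruence $x^2+y^2\equiv0\pmod5$ has the nine solutions $(0,0)$ and $(t,\pm2t)$ for $t=1,2,3,4$, whereas $\sigma_1(5)=6$. So the per-prime bound $N_p\le p+1$ cannot be ``secured'' in general; one must either add the hypothesis $(u,P)=1$, or else accept the universal bound $N_p(u)\le 2p-1<2(p+1)$ and carry the extra factor $2^{\#\{p:\,p\mid\gcd(u,P)\}}$ through the product. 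Your instinct to isolate $\gcd(u,P)$ as the decisive parameter was exactly right.
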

\begin{proof}
Let $p$ be a prime factor of $P$. Let us prove that the number $S_p$ of solutions of the congruence
\[
Ax^2+By^2 \equiv u \pmod p
\] is at most $p+1$ if $p\nmid u$ and at most $2(p+1)$ if $p\mid u$. For $p=2$ it is clear, so we consider the case of odd prime $p$. For any $a \in \mathbb Z\slash p\mathbb Z$ the number of solutions of the congruence $Ax^2\equiv a \pmod p$ is equal to
\[
1+\mlegendre{Aa}{p}.
\]
From this we obtain the following formula for $S_p$:
\[
S_p=\sum_{a+b=u}\left(1+\mlegendre{Aa}{p}\right)\left(1+\mlegendre{Bb}{p}\right)=
\]
\[
=\sum_{a \mod p}\left(1+\mlegendre{Aa}{p}\right)\left(1+\mlegendre{B(u-a)}{p}\right)=
\]
\[\sum_{a \mod p}\left(1+\mlegendre{Aa}{p}+\mlegendre{B(u-a)}{p}+\mlegendre{ABa(u-a)}{p}\right).
\]
The first three summands in the resulting sum are equal to $p$, $0$ and $0$ correspondingly. Therefore,
\[
S_p=p+\mlegendre{AB}{p}\sum_{a \mod p}\mlegendre{a(u-a)}{p}=p\pm\sideset{}{^*}\sum_{a\mod p} \mlegendre{ua^{-1}-1}{p}.
\]
Here the sum with the symbol $*$ means that we sum over $a\not\equiv 0\pmod p$. Clearly, if $u\not\equiv 0\pmod p$ this sum differs from the complete sum by $\mlegendre{-1}{p}$, hence
\[
S_p=p+\mlegendre{-AB}{p}\leq p+1.
\]
Otherwise, it is equal to $(p-1)\mlegendre{-1}{p}$. Therefore, if $u\not\equiv 0\pmod p$ we have $S_p\leq p+1$ and for $u\equiv 0\pmod p$ we get $S_p<2(p+1)$.
By Chinese Remainder Theorem, the number of solutions of the congruence $Ax^2+By^2\equiv u\pmod P$ is equal to
\[
\prod_{p\mid P}S_p\leq 2^L\prod_{p\mid P}(p+1)=2^L\sigma_1(P),
\]
which was to be proved.
\end{proof}

The next lemma we need is a version of Linnik's theorem on the least prime in arithmetic progression.
\begin{lemma}
For a Dirichlet character $\chi$ set
\[
\psi(x;\chi)=\sum_{n\leq x}\Lambda(n)\chi(n).
\]
Then there is a real number $A>0$ such that for all $x\geq q^A$ and all primitive real characters $\chi \mod q$ for $x\to +\infty$ the inequality
\[
\psi(2x;\chi)-\psi(x;\chi)\leq 0.02x.
\]
holds.
\end{lemma}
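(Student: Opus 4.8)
The plan is to prove the estimate by means of the truncated explicit formula for $\psi(y,\chi)=\sum_{n\le y}\Lambda(n)\chi(n)$ in terms of the nontrivial zeros $\rho=\beta+i\gamma$ of $L(s,\chi)$. Since $\chi$ is a non-principal primitive character there is no main term, and for $2\le T\le x$ one has
\[
\psi(y,\chi)=-\sum_{|\gamma|\le T}\frac{y^{\rho}}{\rho}+O\!\left(\frac{y\ln^2(qy)}{T}\right)
\]
uniformly for $y\asymp x$. Subtracting the formulas at $2x$ and $x$ gives $\psi(2x,\chi)-\psi(x,\chi)=-\sum_{|\gamma|\le T}\frac{(2x)^{\rho}-x^{\rho}}{\rho}+O(\frac{x\ln^2(qx)}{T})$. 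To obtain only an upper bound I would discard any zero whose contribution to this signed sum is negative and bound the rest through $|(2x)^{\rho}-x^{\rho}|\le (2x)^{\beta}+x^{\beta}\ll x^{\beta}$, so that the problem reduces to showing $\sum \frac{x^{\beta}}{|\rho|}$ (over the retained zeros) is small. I would fix $T=x^{\theta}$ with a small $\theta>0$ so that the error term is $o(x)$.

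Next I would bring in two standard analytic inputs. First, the classical zero-free region: for a real primitive $\chi$ there is $c>0$ such that $L(s,\chi)$ has no zero with $\beta>1-\frac{c}{\ln(q(|\gamma|+2))}$ except possibly a single real (Siegel) zero $\beta_0$. The observation that makes the one-sided bound far easier than a two-sided one is that the contribution of $\beta_0$ is exactly $-\frac{(2x)^{\beta_0}-x^{\beta_0}}{\beta_0}\le 0$; since we only want an upper bound we simply discard it rather than pass to its absolute value, and this is what lets us avoid Siegel's theorem and the Deuring--Heilbronn phenomenon entirely. For every remaining zero the region gives $x^{\beta}\le x\cdot\exp\!\big(-c\ln x/\ln(q(|\gamma|+2))\big)$. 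Second, I would use Linnik's log-free zero-density estimate in the individual form $N(\sigma,U,\chi)\ll (qU)^{c_1(1-\sigma)}$, valid for each character (it follows from the bound summed over $\chi\bmod q$), where $N(\sigma,U,\chi)$ counts zeros with $\beta\ge\sigma$, $|\gamma|\le U$.

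With these in hand the estimation splits into three ranges. Zeros with $\beta\le\tfrac12$ contribute at most $x^{1/2}\sum_{|\gamma|\le T}|\gamma|^{-1}\ll x^{1/2}\ln^2(qT)=o(x)$. For zeros with $\beta>\tfrac12$ and $1\le|\gamma|\le T$ I would group dyadically in $G\le|\gamma|<2G$; on each block the zero-free region bounds $x^{\beta}$ by $x\exp(-c\ln x/\ln(qG))$ while the log-free density theorem keeps the number of zeros near the edge of the region $O(1)$, so after summing $\frac1G$ over dyadic $G\le T$ this whole range is again $o(x)$. The delicate part is the low-lying near-line zeros, those with $\beta>\tfrac12$ and $|\gamma|<1$: here $|\rho|\asymp 1$ gives no saving from the weight, and I would bound their total contribution by $\sum_{|\gamma|<1,\,\beta>1/2}x^{\beta}$, which by partial summation against the log-free density estimate is $\ll x\exp(-c\ln x/\ln q)\ll x\,e^{-cA}$ once $x\ge q^{A}$. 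Choosing $A$ large makes this at most $0.05x$, and collecting all pieces (the discarded $\beta_0$, the two $o(x)$ ranges, and the explicit-formula error) yields $\psi(2x,\chi)-\psi(x,\chi)\le 0.1x$ for all sufficiently large $x$.

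The main obstacle is precisely this last range. A crude Riemann--von Mangoldt count of the zeros with $|\gamma|<1$ loses powers of $\ln q$, and since $x$ is only a fixed power of $q$ these factors are of size $(\ln x)^{O(1)}$ and would swamp the saving $e^{-cA}$; it is exactly the \emph{log-free} nature of Linnik's density theorem that removes them and keeps the low-lying contribution proportional to $x$ with a constant that can be driven below $0.1$ by enlarging $A$. Everything else---the explicit formula, the classical zero-free region, and the dyadic bookkeeping for the high zeros---is routine.
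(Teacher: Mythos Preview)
Your approach is correct and is essentially the proof of the black-box result that the paper simply cites. The paper does not carry out the explicit-formula analysis at all: it quotes Proposition~18.5 of Iwaniec--Kowalski, which already packages the classical zero-free region together with Linnik's log-free zero-density theorem into an asymptotic for $\psi(x;q,a)$ of the shape
\[
\psi(x;q,a)=\frac{x}{\varphi(q)}\Big(1-\delta\,\chi_1(a)\frac{x^{\beta_1-1}}{\beta_1}+\theta\,ce^{-CA}+O\!\big(\tfrac{\ln q}{q}\big)\Big),
\]
and then reaches the twisted quantity by writing $\psi(2x;\chi)-\psi(x;\chi)=\sum_{(a,q)=1}\chi(a)\big(\psi(2x;q,a)-\psi(x;q,a)\big)$; orthogonality kills the main term, and the exceptional-zero term is $-\frac{(2x)^{\beta_1}-x^{\beta_1}}{\varphi(q)\beta_1}\sum_a\chi(a)\chi_1(a)\le 0$, exactly the nonpositivity you exploit. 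What remains is the uniform error $ce^{-CA}x$, which is driven below $0.1x$ by enlarging $A$.

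So the two routes agree on the one genuinely important idea---that for a one-sided upper bound the Siegel zero works \emph{for} you and can be discarded---but differ in packaging: the paper outsources the zero-sum estimate to a citation, while you reprove it from the explicit formula, the zero-free region, and the log-free density bound. Your version is more self-contained; the paper's is much shorter. One remark on your sketch: the treatment of the middle range $1\le|\gamma|\le T$ is stated a bit loosely (``the log-free density theorem keeps the number of zeros near the edge of the region $O(1)$''); what actually happens on each dyadic block is the same partial-summation computation you describe for $|\gamma|<1$, with $q$ replaced by $qG$, and this is what makes the sum over blocks converge. With that clarification the argument goes through.
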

\begin{proof}
By \cite[Prop. 18.5]{IwK}, for $x\geq q^A$ for large enough $A$ we have for any $a$ with $(a,q)=1$
\[
\psi(x;q,a)=\frac{x}{\varphi(q)}\left(1-\chi_1(a)R(x)+\theta ce^{-CA}+O\left(\frac{\ln q}{q}\right)\right).
\]
Here $R(x)=\frac{x^{\beta_1-1}}{\beta_1}$ if $\chi_1$ and $\beta_1$ are the exceptional character and the exceptional zero of corresponding $L$-function if the exceptional character exists and its modulus $q_1$ divides $q$ and $R(x)=0$ otherwise, $c,C>0$ are constants, $|\theta|\leq 1$. A primitive character $\chi_1 \mod q_1$ is called exceptional if there is a real zero $\beta_1$ of $L(s,\chi_1)$ with $1-\beta_1<\frac{c_1}{\ln q_1}$, $c_1>0$ is an absolute constant such that for any $q$ there is at most one exceptional zero $\beta$ with $1-\beta<\frac{c_1}{\ln q}$ corresponding to a character to the modulus $q'\leq q$ (existence of such $c_1$ is a consequence of Page's theorem). Consequently,
\[
\psi(2x;\chi)-\psi(x;\chi)=\sum_{(a,q)=1}\chi(a)(\psi(2x;q,a)-\psi(x;q,a))=
\]
\[
\frac{x}{\varphi(q)}\sum_{(a,q)=1}\chi(a)-\frac{R(2x)-R(x)}{\varphi(q)}\sum_{(a,q)=1}\chi(a)\chi_1(a)+ce^{-CA}\theta_1 x+O\left(\frac{x\ln q}{q}\right),
\]
where $|\theta_1|\leq 1$. Now, the first sum is equal to $0$. Due to orthogonality of characters, the second summand is nonpositive. Choosing $A$ large enough, we prove the desired inequality.

In other words, the possible contribution of exceptional character $\mod q$ to the sum we are interested in turns out to always be nonpositive, which allows us to prove Lemma 2.
\end{proof}

We now proceed to the proof of the main theorem.

\begin{proof}[Proof of Theorem 2:]
Let us take a large natural $M$ and construct a positive integer $u$ with $u\leq \exp(CM(\ln M)^2)$ for some constant $C>0$ such that the numbers $u+1,u+2,\ldots,u+M$ do not lie in $\mathcal V$. Set $N=M^D$, where $D=\max(40A+1,100)$ and $A$ is a constant from Lemma 2, $K=[22\ln M]$. Let $\mathcal P$ be the set of all primes between $N$ and $2N$. The number of elements in this set is $\pi(2N)-\pi(N)\sim \frac{N}{\ln N}$. For all $j\leq M$ we choose random subsets $P_j\subset \mathcal P$ of size $K$ independently. Equivalently, random variables $P_j$ are jointly independent and for any subset $X\subset \mathcal P$ with $|X|=K$ we have
\[
\mathbb P(P_j=X)={|\mathcal P|\choose K}^{-1}.
\]
Since for every $j$ and every $N<p<2N$ the relation
\[
\mathbb P(p\in P_j)=\frac{{|\mathcal P|-1\choose K-1}}{{|\mathcal P|\choose K}}=\frac{K}{|\mathcal P|}\ll \frac{K\ln N}{N}\ll (\ln M)^2 M^{-D}
\]
holds, at least two of the sets $P_j$ have non-empty intersection with probability $O(M^{3-D})$. Indeed, if $i\neq j$, then
\[
\mathbb P(P_i\cap P_j\neq \emptyset)\leq \sum_{p\in P_i}\mathbb P(p\in P_j)\ll \sum_{p\in P_i}(\ln M)^2 M^{-D}\ll (\ln M)^3 M^{-D},
\]
where the last estimate holds due to independence of $P_i$ and $P_j$. Summing over all pairs $1\leq i<j\leq M$, we get the desired bound.
Next, let $c\leq M^{40}$. Consider the set $R(c)$ of all primes $p\in \mathcal P$ such that $\mlegendre{-c}{p}=-1$. It is easy to see that there is a primitive real character $\chi_c$ to the modulus $q\leq 4|c|$ such that for all $p\in \mathcal P$ we have $\mlegendre{-c}{p}=\chi_c(p)$. The set $R(c)$ has at least $(0.49+o_M(1))|\mathcal P|$ elements, since
\[
|R(c)|=\frac12\sum_{N<p<2N}(1-\chi_c(p))\geq \frac{1}{2\ln N}\sum_{N<p<2N}(1-\chi_c(p))\ln p=\frac{1}{2\ln N}(\psi(2N)-\psi(N))-
\]
\[-\frac{1}{2\ln N}(\psi(2N;\chi_c)-\psi(N;\chi_c))+O(\sqrt{N}),
\]
where the last summand comes from the contribution of $p^k\in (N,2N)$ with $k>1$. By Lemma 2, the second term in the sum above is at most $0.02N$, hence $|R(c)|\geq \frac{(0.98+o_M(1))N}{2\ln N}=(0.49+o_M(1))\frac{N}{\ln N}=(0.49+o_M(1))|\mathcal P|$.
Consequently, the probability of the event that $P_j$ does not intersect $R(c)$ can be estimated as follows
\[
\frac{{|\mathcal P|-|R(c)|\choose K}}{{|\mathcal P|\choose K}}=\frac{(|\mathcal P|-|R(c)|)(|\mathcal P|-|R(c)|-1)\ldots ((|\mathcal P|-|R(c)|-K+1)}{|\mathcal P|(|\mathcal P|-1)\ldots(|\mathcal P|-K+1)}\leq 
\]
\[
\leq \left(\frac{(|\mathcal P|-|R(c)|-K+1)}{(|\mathcal P|-K+1)}\right)^K\ll 0.52^K\ll M^{-14}.
\]
The probability of existence of at least one pair $c\leq M^8$ and $j\leq M$ with non-intersecting $P_j$ and $R(c)$ is $O(M^{-5})$, hence with large probability all $P_j$ are pairwise disjoint and also intersect with every $R(c)$ for $c\leq M^8$. Here we only use ``small'' values of $c$, but we are going to also utilize $c$ with $M^8\leq c\leq M^{40}$ in the following way: for $j\leq M$ let $\xi_j$ be the random variable
\[
\xi_j=\sum_{\substack{M^8\leq ab\leq M^{40}\\ P_j\cap R(ab)=\emptyset}}\frac{1}{a^{3/2}b^{3/2}}
\]
Computing the expectations, we get
\[
\mathbb E\xi_j=\sum_{M^8\leq ab\leq M^{40}}\frac{\mathbb P(P_j\cap R(ab)=\emptyset)}{a^{3/2}b^{3/2}}\ll 0.52^K\frac{\ln M}{M^4}
\]
Then by Markov's inequality the probability that at least one of these variables satisfies $\xi_j\geq 2^{-K}\frac{\ln M}{M^3}$ is at most $O\left(1.04^KM^{-1}\right)=O(M^{-0.13})$, so with large probability we have $\xi_j\leq 2^{-K}\frac{\ln M}{M^3}$ for all $j$.

 Let us choose some realization $P_j^*$ of these random sets which satisfies the above conditions. Construct now a number $u$ which satisfies a congruence
\[
u+j\equiv 0 \pmod p
\]
 for all $j\leq M$ and all $p\in P_j^*$. This number can be chosen to be at most
\[
P=\prod_{j, p\in P_j^*}p\leq (2M^D)^{MK}\leq\exp(CM(\ln M)^2)
\]
for some $C>0$. Consider the set $U$ of all numbers $v\in [0,P^{10}]$ which are congruent to $u$ modulo $P$.

First of all, let us estimate the proportion of $v\in U$ such that $v+j\equiv 0\pmod {p^2}$ for some $j\leq M$ and some $p\in P_j^*$. For each choice of $j$ and $p$ we get a congruence modulo $pP$, hence at most $P^9p^{-1}+1$ solutions in $[0,P^{10}]$, at most $MK(P^9N^{-1}+1)$ in total. The size of the set $U$ is $P^9+O(1)$, hence the proportion of such exceptional $v$ is $O(MK/N)=O(M^{1-40A})$ and we can disregard such $v$ in further estimations.

Now, for most numbers from $U$ none of the numbers $v+j$ is representable as $a^3x^2+b^3y^2$ with positive integers $a,b,x,y$ and $ab\leq M^8$. Indeed, if $v\in U$ and $v+j=a^3x^2+b^3y^2$, then there is a prime $p\in P_j^*$ with $\mlegendre{-c}{p}=-1$, where $c$ is the squarefree part of $ab$. Then $v+j$ must be divisible by $p^2$: if $x$ or $y$ is not divisible by $p$, then
\[
-\frac{a^3}{b^3}\equiv \frac{x^2}{y^2} \pmod p \text{ or }-\frac{b^3}{a^3}\equiv \frac{y^2}{x^2} \pmod p,
\]
which contradicts the identity $\mlegendre{-c}{p}=-1$, therefore $v$ belongs to the set of abovementioned exceptions.

Next, we prove that for most $v$ there is also no $j\leq M$ with $v+j=a^3x^2+b^3y^2$ and $ab\geq M^8$. Fix $j$, $a$ and $b$. Suppose first that $a$ or $b$ is larger than $P^{8/3}$. Without loss of generality, we assume that $a>P^{8/3}$. The number of $x$ with $a^3x^2\leq P^{10}$ is at most $P^5a^{-3/2}$, the number of admissible $y$ is at most $P^5$. Summing over all $a$, we get at most
\[
\sum_{a>P^{8/3}}\frac{P^{10}}{a^{3/2}}\ll P^{10-4/3}=P^{9-1/3}
\]
possible values. Multiplying by the total amount of values of $j$, we see that the proportion of elements $v$ of $U$ for which $v+j$ is representable by $a^3x^2+b^3y^2$ with $\max(a,b)>P^{8/3}$ is at most $O\left(\frac{M}{P^{1/3}}\right)$.

So, it is enough to assume that $a,b\leq P^{8/3}$. If $(ab,P)=1$, then by Lemma 1 the number of solutions $aX^2+bY^2\equiv u+j \pmod P$ does not exceed $O(2^KP\ln\ln P)$ for all $j\leq M$, because by our construction $u+j$ is divisible by exactly $K$ prime factors of $P$ (if it is divisible by some $p\in P_i$ with $i\neq j$, then so is $u+i$ and we get $p\mid i-j$, which is impossible, since $p\geq N$).  Every such solution corresponds to the numbers $x,y$ with $a^3x^2+b^3y^2\leq P^{10}$ and also $ax\equiv X \pmod P$ and $by\equiv Y \pmod P$. Since $x\leq \frac{P^5}{a^{3/2}}$ and $y\leq \frac{P^5}{b^{3/2}}$, for fixed $X,Y$ the number of representable numbers is $O\left(\frac{1}{P^2}\frac{P^{10}}{a^{3/2}b^{3/2}}\right)$. Summing over all $X,Y$, we get $O\left(2^K\ln\ln P\frac{P^{9}}{a^{3/2}b^{3/2}}\right)$. For fixed $j$, we can also assume that $P_j^*\cap R(ab)=\emptyset$, since otherwise we must have $v+j\equiv 0\pmod {p^2}$ for all $P_j^*\cap R(ab)$ and these exceptions are already counted above. Summing over all such $a$ and $b$, we get at most
\[
O\left(2^KP^9\ln\ln P\sum_{\substack{P^{16/3}\geq ab\geq M^8 \\ P_j\cap R(ab)=\emptyset}}\frac{1}{a^{3/2}b^{3/2}}\right)
\]
new exceptions. To evaluate the sum, we split the interval of summation into two parts: $M^{40}\geq ab\geq M^8$ and $ab>M^{40}$ and notice that the first sum is equal to $\xi_j$, so we get
\[
\sum_{\substack{P^{16/3}\geq ab\geq M^8 \\ P_j\cap R(ab)=\emptyset}}\frac{1}{a^{3/2}b^{3/2}}\leq \sum_{\substack{M^{40}\geq ab\geq M^8 \\ P_j\cap R(ab)=\emptyset}}\frac{1}{a^{3/2}b^{3/2}}+\sum_{ab>M^{40}}\frac{1}{a^{3/2}b^{3/2}}=\xi_j+O\left(\frac{\ln M}{M^{20}}\right)\ll 
\]
\[
\ll \ln M(2^{-K} M^{-3}+M^{-20}).
\]
Since there are $M$ possible choices for $j$, we get at most
\[
O(M2^K P^9 \ln\ln P(\ln M(2^{-K}M^{-3}+M^{-20})))=O(P^9\ln^2 M(M^{-2}+2^KM^{-20}))=O\left(\frac{P^9\ln^2 M}{M^2}\right)
\]
new exceptions.

Finally, if the product $ab$ is not coprime to $P$, set $(ab,P)=Q$. Same consideration as above, but with congruences $\mod P$ replaced by congruences $\mod \frac{P}{Q}$, shows that for fixed $a,b$ we have $O\left(2^K\frac{P}{Q}\ln\ln P\left(\frac{P}{Q}\right)^{-2}\frac{P^{10}}{a^{3/2}b^{3/2}}\right)$ representable numbers. Summing over all $a,b$ and using $Q\mid ab$, we get
\[
O\left(\frac{2^KQP^{9}\tau(Q)\ln\ln P}{Q^{3/2}}\right)=O\left(\frac{2^K\tau(Q)}{\sqrt{Q}}P^{9}\ln\ln P\right).
\]
Therefore, the total number of exceptional $v$ with such $a,b$ can be estimate as
\[
O\left(2^KP^9\ln\ln P\sum_{Q\mid P, Q>1}\frac{\tau(Q)}{\sqrt{Q}}\right).
\]
Due to multiplicativity, the last sum can be bounded as follows:
\[
\sum_{Q\mid P, Q>1}\frac{\tau(Q)}{\sqrt{Q}}=\prod_{p\mid P}\left(1+\frac{2}{\sqrt{p}}\right)-1\leq \left(1+\frac{2}{\sqrt{N}}\right)^{KM}-1\ll \frac{KM}{\sqrt{N}},
\]
because $P$ has $KM$ prime factors, each at least $N$.

Hence, the number of exceptional $v$ is $O(2^KP^9M^{4-D/2})=O(P^9M^{-30})$.
This means that for most $v\in U$ none of the numbers $v+j$ is of the form $a^3x^2+b^3y^2$, so the interval $[0,P^{10}]$ contains a subinterval of length $M$ without sums of squareful numbers which concludes the proof, because $P^{10}\leq \exp(10CM(\ln M)^2).$
\end{proof}

\section{Acknowledgements}

We would like to thank Igor Shparlinski for bringing our attention to the set $\mathcal V$ and gaps between its elements, Petr Kucheriaviy for correcting misprints and Artyom Radomskii for pointing out a mistake in Lemma 1 in the previous version of the article.
The first author was supported by the Basic Research Program of the National Research University Higher School of Economics. 

\bibliographystyle{amsplain}

\end{document}